\newtheorem{thm}{Theorem}
\newtheorem{lem}[thm]{Lemma}
\newtheorem{prop}[thm]{Proposition}
\theoremstyle{nonumberplain}
\newtheorem{proof}{Proof}
\newcommand{\mathU}{\mathcal{U} }
\newcommand{\U}{$\mathcal{U}$ }
\title{Is the right-angled building associated to a universal group unique?}
\author{Lara Be{\ss}mann \thanks{Funded as project KR1668/10 by the Deutsche Forschungsgemeinschaft, and under Germany's Excellence Strategy EXC 2044-390685587, Mathematics M\"unster: Dynamics-Geometry-Structure. This work is part of the author's PhD thesis. }}
\begin{document}
\maketitle 
\begin{abstract}
A universal group is a subgroup of the group of type preserving automorphisms of a right-angled building and hence associated to this building. 
A question is then if this universal group can act chamber-transitively and with compact open stabilisers on a different right-angled building of the same type.
We answer this question and define two universal groups associated to different right-angled buildings which are isomorphic as topological groups.  
\end{abstract}

\section{Introduction}
A universal group associated to a right-angled building is a subgroup of the group of type-preserving automorphisms of the building, such that the local actions are given by prescribed permutation groups. 
These groups have been introduced for trees by Burger and Mozes in \cite{BM1} and generalised to right-angled buildings by De Medts, Silva, and Struyve in  \cite{TomAnaKoen}.

When studying  universal groups associated to right-angled buildings, the question arises if two universal groups associated to different right-angled buildings can be isomorphic.

We will answer this question and describe two universal groups associated to different right-angled buildings that are isomorphic as topological groups. 
By choosing the thicknesses the right way, we obtain two different buildings of the same type but in such a way that we also obtain a bijection of chambers. 
To get this bijection we use the so-called tree-wall tree. 
Then we use this bijection of chambers to obtain a bijection of the associated universal groups.
For this we choose the local groups in the right way. 
This bijection is then continuous and open and hence the universal groups are isomorphic as topological groups. 

We start with defining universal groups.
A building of type $(W,I)$ is right-angled if the Coxeter system $(W,I)$ is right-angled, i.e. $m_{ij} \in \{2, \infty\}$ for all $i,j \in I$ with $i \neq j$.
  
Let $\Delta$ be a right-angled building of type $(W,I)$ with prescribed thickness $(q_i)_{i \in I}$ such that the $q_i$ are finite but at least 3 for every $i \in I$. 
Then for every $i \in I$ every $i$-panel contains exactly $q_i$ many chambers.
The set of chambers of $\Delta$ is denoted by $Ch(\Delta)$.  
We view automorphisms of $\Delta$ as maps from $Ch(\Delta)$ to $Ch(\Delta)$ such that $i$-adjacency is preserved for all $i \in I$. 
For every $i \in I$ let $X_i$ be a set of $i$-colours with cardinality $q_i$. 
Let
\begin{align*}
\lambda \colon Ch(\Delta) \to \prod_{i \in I} X_i, c \mapsto (\lambda_i(c))_{i \in I}
\end{align*}
be a map, such that $\lambda_i \vert_{\mathcal{P}} \colon \mathcal{P} \to X_i$ is bijective and $\lambda_j\vert_{\mathcal{P}} \colon \mathcal{P} \to X_j$ is constant for every $i \in I$, every $j \in I$ with $j \neq i$, and every $i$-panel $\mathcal{P}$. 
We call $\lambda$ then a \textit{legal colouring} of $\Delta$. 

The map 
\begin{align*}
\sigma_{\lambda}(g,\mathcal{P}) = \lambda_i\vert_{g(\mathcal{P})} \circ g\vert_{\mathcal{P}} \circ (\lambda_i\vert_{\mathcal{P}})^{-1}
\end{align*}
is called the \textit{local action} of the automorphism $g \in Aut(\Delta)$ on the $i$-panel $\mathcal{P}$. 

The universal group is the subgroup of the automorphism group such that all local actions are contained in prescribed permutation groups. 
Let $F_i \subseteq Sym(X_i)$ be a permutation group, for every $i \in I$.
We refer to these groups as the \textit{local groups}. 
Then the \textit{universal group of $\Delta$ with respect to the groups $(F_i)_{i \in I}$} is defined as
\begin{align*}
\mathU = \left\{ g \in Aut(\Delta) \mid \sigma_{\lambda}(g, \mathcal{P}) \in F_i \text{ for every } i \in I \text{ and every $i$-panel } \mathcal{P} \right\}.
\end{align*}

These universal groups have been introduced and studied in \cite{TomAnaKoen} and have been further studied in \cite{TomJens}. 
The properties of the universal group depend on and correspond to the properties of the local groups. 
By Proposition 3.7 in \cite{TomAnaKoen} the universal groups for different legal colourings are conjugate in $Aut(\Delta)$ and hence the structure of \U does not depend on the choice of the colouring. 

We equip the automorphism group of a building with the topology of pointwise convergence. 
The universal group carries then the subspace topology and a neighbourhood basis of the identity is given by pointwise stabilisers of finite sets of chambers in the universal group. 
If $q_i$ is finite for every $i \in I$, then every pointwise stabiliser of a finite set of chambers has finite orbits and thus is compact.  
Hence, the universal group is a locally compact totally disconnected group (cf. \cite[Prop. 2.2 and 3.8]{TomJens}). 

\section{Two isomorphic universal groups}
We want to answer the question if a universal group can act chamber-transitively and with compact open stabilisers on a different building of the same type as the original one. 
For this we construct two universal groups associated to different buildings of the same type and show that they are isomorphic as topological groups. 

So let $\Delta$ and $\tilde{\Delta}$ be two locally finite thick buildings of type $(W,I)$ where $I=\{i,j,k\}$ and $W=\left\langle i,j,k \mid i^2 = j^2 = k^2 =(ij)^2=1 \right\rangle$.
Moreover, we prescribe the thickness $(q_i,q_k,q_j)$ for $\Delta$ and $(\tilde{q}_i,\tilde{q}_k,\tilde{q}_j)$ for $\tilde{\Delta}$
such that the equations
\begin{align*}
q_i q_j = \tilde{q}_k \quad\text{and}\quad \tilde{q}_i \tilde{q}_j = q_k
\end{align*}
are satisfied.
Parts of the Davis realisations of two buildings of this type, which satisfy the assumptions are pictured in Figure \ref{DavRealExample}. 
\begin{figure} \centering
First, a part of the Davis realisation of $\Delta$ with thickness $q_i=4, q_j=3$ and $q_k=9$. 

\begin{tikzpicture}[kpoint/.style={circle,draw,fill=lightgray,inner sep=1.5pt},
ipoint/.style={circle,draw,fill=darkgray,inner sep=1.5pt},
jpoint/.style={circle,draw,fill=gray,inner sep=1.5pt},
chamberpoint/.style={circle=4,draw,fill=black,inner sep=1pt},
					ijpoint/.style={diamond,draw,fill=black,inner sep=1.5pt},scale=1.8]

\coordinate (pkc1) at (0,0);

\coordinate (c9) at (1,0);
\coordinate (c2) at (0.77,0.64);
\coordinate (c3) at (0.17,0.98);
\coordinate (c4) at (-0.5,0.87);
\coordinate (c5) at (-0.93,0.34);
\coordinate (c6) at (-0.93,-0.34);
\coordinate (c7) at (-0.5,-0.87);
\coordinate (c8) at (0.17,-0.98);
\coordinate (c1) at (0.77,-0.64);

\coordinate (pic1) at (1.77,-0.64);
\coordinate (pjc1) at (0.77,-1.64);
\coordinate (rijc1) at (1.77,-1.64);
\coordinate (c2pic1) at (2.77,-0.64);
\coordinate (pjc2pic1) at (2.77,-1.64);
\coordinate (c3pic1) at (1.77+0.87,-0.64-0.5);
\coordinate (pjc3pic1) at (1.77+0.87,-0.64-0.5-1);
\coordinate (c4pic1) at (1.77+0.59,-0.64+0.81);
\coordinate (pjc4pic1) at (1.77+0.59,-0.64+0.81-1);
\coordinate (c2pjc1) at (0.77+0.38, -1.64-0.92);
\coordinate (pic2pjc1) at (0.77+0.38+1, -1.64-0.92);
\coordinate (c3pjc1) at (0.77,-1.64-1);
\coordinate (pic3pjc1) at (0.77+1,-1.64-1);

\coordinate (c2pjc2pic1) at (2.77+0.38, -1.64-0.92);
\coordinate (c3pjc2pic1) at (2.77, -1.64-1);

\coordinate (c2pjc3pic1) at (1.77+0.87+0.38,-0.64-0.5-1-0.92);
\coordinate (c3pjc3pic1) at (1.77+0.87, -1.64-1-0.5);

\coordinate (c2pjc4pic1) at (1.77+0.59+0.38,-0.64+0.81-1-0.92);
\coordinate (c3pjc4pic1) at (1.77+0.59, -1.64-1+0.81);

\coordinate (pkc2pic1) at (2.77+0.92,-0.64+0.38);
\coordinate (pkc3pic1) at (1.77+0.87+0.92,-0.64-0.5+0.38);
\coordinate (pkc4pic1) at (1.77+0.59+0.92,-0.64+0.81+0.38);
\coordinate (pkc2pjc1) at (0.77+0.38-0.92, -1.64-0.92+0.38);
\coordinate (pkc3pjc1) at (0.77-0.92,-1.64-1-0.38);
\coordinate (pkc2pjc2pic1) at (2.77+0.5+0.92, -1.64-0.87-0.38);
\coordinate (pkc3pjc2pic1) at (2.77+0.92, -1.64-1-0.38);
\coordinate (pkc2pjc3pic1) at (1.77+0.87+0.38+0.92,-0.64-0.5-1-0.92-0.38);
\coordinate (pkc3pjc3pic1) at (1.77+0.87+0.92, -1.64-1-0.5-0.38);
\coordinate (pkc2pjc4pic1) at (1.77+0.59+0.38+0.92,-0.64+0.81-1-0.92-0.38);
\coordinate (pkc3pjc4pic1) at (1.77+0.59+0.92, -1.64-1+0.81-0.38);

\draw (pkc1) -- (c1);
\draw (pkc1) -- (c2);
\draw (pkc1) -- (c3);
\draw (pkc1) -- (c4);
\draw (pkc1) -- (c5);
\draw (pkc1) -- (c6);
\draw (pkc1) -- (c7);
\draw (pkc1) -- (c8);
\draw (pkc1) -- (c9);

\draw (c1) -- (pic1);
\draw (c2pic1) -- (pic1);
\draw (c3pic1) -- (pic1);
\draw (c4pic1) -- (pic1);
\draw (c1) -- (pjc1);
\draw (c2pjc1) -- (pjc1);
\draw (c3pjc1) -- (pjc1); 
\draw (c2pjc1) -- (pic2pjc1);
\draw (c3pjc1) -- (pic3pjc1);
\draw (c2pic1) -- (pjc2pic1);
\draw (c3pic1) -- (pjc3pic1);
\draw (c4pic1) -- (pjc4pic1);
\draw (pic1) -- (rijc1);
\draw (pjc1) -- (rijc1);
\draw (pic2pjc1) -- (rijc1);
\draw (pic3pjc1) -- (rijc1);
\draw (pjc2pic1) -- (rijc1);
\draw (pjc3pic1) -- (rijc1);
\draw (pjc4pic1) -- (rijc1);

\draw (pjc2pic1) -- (c2pjc2pic1);
\draw (pjc2pic1) -- (c3pjc2pic1);
\draw (pic3pjc1) -- (c3pjc2pic1);
\draw (pic2pjc1) -- (c2pjc2pic1);

\draw (pjc3pic1) -- (c2pjc3pic1);
\draw (pic2pjc1) -- (c2pjc3pic1);
\draw (pjc3pic1) -- (c3pjc3pic1);
\draw (pic3pjc1) -- (c3pjc3pic1);

\draw (pjc4pic1) -- (c2pjc4pic1);
\draw (pic2pjc1) -- (c2pjc4pic1);
\draw (pjc4pic1) -- (c3pjc4pic1);
\draw (pic3pjc1) -- (c3pjc4pic1);

\draw (pkc2pic1) -- (c2pic1);
\draw (pkc3pic1) -- (c3pic1); 
\draw (pkc4pic1) -- (c4pic1);
\draw (pkc2pjc1) -- (c2pjc1);
\draw (pkc3pjc1) -- (c3pjc1);
\draw (pkc2pjc2pic1) -- (c2pjc2pic1); 
\draw (pkc3pjc2pic1) -- (c3pjc2pic1);
\draw (pkc2pjc3pic1) -- (c2pjc3pic1);
\draw (pkc3pjc3pic1) -- (c3pjc3pic1); 
\draw (pkc2pjc4pic1) -- (c2pjc4pic1);
\draw (pkc3pjc4pic1) -- (c3pjc4pic1); 

\draw node[chamberpoint] at (c1){};
\draw node[chamberpoint] at (c2){};
\draw node[chamberpoint] at (c3){};
\draw node[chamberpoint] at (c4){};
\draw node[chamberpoint] at (c5){};
\draw node[chamberpoint] at (c6){};
\draw node[chamberpoint] at (c7){};
\draw node[chamberpoint] at (c8){};
\draw node[chamberpoint] at (c9){};
\draw node[chamberpoint] at (c2pic1){};
\draw node[chamberpoint] at (c3pic1){};
\draw node[chamberpoint] at (c4pic1){};
\draw node[chamberpoint] at (c2pjc1){};
\draw node[chamberpoint] at (c3pjc1){};
\draw node[chamberpoint] at (c2pjc2pic1){};
\draw node[chamberpoint] at (c2pjc3pic1){};
\draw node[chamberpoint] at (c3pjc2pic1){};
\draw node[chamberpoint] at (c3pjc3pic1){};
\draw node[chamberpoint] at (c2pjc4pic1){};
\draw node[chamberpoint] at (c3pjc4pic1){};

\draw node[kpoint] at (pkc1){};
\draw node[kpoint] at (pkc2pic1){};
\draw node[kpoint] at (pkc3pic1){};
\draw node[kpoint] at (pkc4pic1){};
\draw node[kpoint] at (pkc2pjc1){};
\draw node[kpoint] at (pkc3pjc1){};
\draw node[kpoint] at (pkc2pjc2pic1){};
\draw node[kpoint] at (pkc2pjc3pic1){};
\draw node[kpoint] at (pkc3pjc2pic1){};
\draw node[kpoint] at (pkc3pjc3pic1){};
\draw node[kpoint] at (pkc2pjc4pic1){};
\draw node[kpoint] at (pkc3pjc4pic1){};

\draw node[ipoint] at (pic1){};
\draw node[ipoint] at (pic2pjc1){};
\draw node[ipoint] at (pic3pjc1){};

\draw node[jpoint] at (pjc1){};
\draw node[jpoint] at (pjc2pic1){};
\draw node[jpoint] at (pjc3pic1){};
\draw node[jpoint] at (pjc4pic1){};

\draw node[ijpoint] at (rijc1){};

\draw node at (2.77+0.92+0.2,-0.64+0.38){$\dots$};
\draw node at (1.77+0.87+0.92+0.2,-0.64-0.5+0.38){$\dots$};
\draw node at (1.77+0.59+0.92+0.2,-0.64+0.81+0.38){$\dots$};
\draw node at (0.77+0.38-0.92-0.2, -1.64-0.92+0.38){$\dots$};
\draw node at (0.77-0.92-0.2,-1.64-1-0.38){$\dots$};
\draw node at (2.77+0.5+0.92+0.2, -1.64-0.87-0.38){$\dots$};
\draw node at (2.77+0.92+0.2, -1.64-1-0.38){$\dots$};
\draw node at (1.77+0.87+0.38+0.92+0.2,-0.64-0.5-1-0.92-0.38){$\dots$};
\draw node at (1.77+0.87+0.92+0.2, -1.64-1-0.5-0.38){$\dots$};
\draw node at (1.77+0.59+0.38+0.92+0.2,-0.64+0.81-1-0.92-0.38){$\dots$};
\draw node at (1.77+0.59+0.92+0.2, -1.64-1+0.81-0.38){$\dots$};

\draw node at (-0.5-0.2,0.87+0.1){$\dots$};
\draw node at (-0.5-0.1,-0.87-0.2){$\dots$};
\draw node at (0.87+0.1,0.5+0.2){$\dots$};
\end{tikzpicture}

Second, a part of the Davis realisation of $\tilde{\Delta}$ with thickness $\tilde{q}_i=3, \tilde{q}_j=3$ and $\tilde{q}_k=12$.

\begin{tikzpicture}[kpoint/.style={circle,draw,fill=lightgray,inner sep=1.5pt},
ipoint/.style={circle,draw,fill=darkgray,inner sep=1.5pt},
jpoint/.style={circle,draw,fill=gray,inner sep=1.5pt},
chamberpoint/.style={circle,draw,fill=black,inner sep=1pt},
					ijpoint/.style={diamond,draw,fill=black,inner sep=1.5pt},scale=1.8]

\coordinate (pkc1) at (0,0);

\coordinate (c12) at (1,0);
\coordinate (c2) at (0.87,0.5);
\coordinate (c3) at (0.5,0.87);
\coordinate (c4) at (0,1);
\coordinate (c5) at (-0.5,0.87);
\coordinate (c6) at (-0.87,0.5);
\coordinate (c7) at (-1,0);
\coordinate (c8) at (-0.87,-0.5);
\coordinate (c9) at (-0.5,-0.87);
\coordinate (c10) at (0,-1);
\coordinate (c11) at (0.5,-0.87);
\coordinate (c1) at (0.87,-0.5);

\coordinate (pic1) at (1.87,-0.5);
\coordinate (pjc1) at (0.87,-1.5);
\coordinate (rijc1) at (1.87,-1.5);
\coordinate (c2pic1) at (2.87,-0.5);
\coordinate (pjc2pic1) at (2.87,-1.5);
\coordinate (c3pic1) at (1.87+0.7,-0.5+0.7);
\coordinate (pjc3pic1) at (1.87+0.7,-0.5+0.7-1);
\coordinate (c2pjc1) at (0.87+0.5, -1.5-0.87);
\coordinate (pic2pjc1) at (0.87+0.5+1, -1.5-0.87);
\coordinate (c3pjc1) at (0.87,-1.5-1);
\coordinate (pic3pjc1) at (0.87+1,-1.5-1);

\coordinate (c2pjc2pic1) at (2.87+0.5, -1.5-0.87);
\coordinate (c3pjc2pic1) at (2.87, -1.5-1);

\coordinate (c2pjc3pic1) at (1.87+0.7+0.5,-0.5+0.7-1-0.87);
\coordinate (c3pjc3pic1) at (1.87+0.7, -1.5-1+0.7);

\coordinate (pkc2pic1) at (2.87+0.92,-0.5+0.38);
\coordinate (pkc3pic1) at (1.87+0.7+0.92,-0.5+0.7+0.38);
\coordinate (pkc2pjc1) at (0.87+0.5-0.92, -1.5-0.87+0.38);
\coordinate (pkc3pjc1) at (0.87-0.92,-1.5-1-0.38);
\coordinate (pkc2pjc2pic1) at (2.87+0.5+0.92, -1.5-0.87-0.38);
\coordinate (pkc3pjc2pic1) at (2.87+0.92, -1.5-1-0.38);
\coordinate (pkc2pjc3pic1) at (1.87+0.7+0.5+0.92,-0.5+0.7-1-0.87-0.38);
\coordinate (pkc3pjc3pic1) at (1.87+0.7+0.92, -1.5-1+0.7-0.38);

\draw (pkc1) -- (c1);
\draw (pkc1) -- (c2);
\draw (pkc1) -- (c3);
\draw (pkc1) -- (c4);
\draw (pkc1) -- (c5);
\draw (pkc1) -- (c6);
\draw (pkc1) -- (c7);
\draw (pkc1) -- (c8);
\draw (pkc1) -- (c9);
\draw (pkc1) -- (c10);
\draw (pkc1) -- (c11);
\draw (pkc1) -- (c12);

\draw (c1) -- (pic1);
\draw (c2pic1) -- (pic1);
\draw (c3pic1) -- (pic1);
\draw (c1) -- (pjc1);
\draw (c2pjc1) -- (pjc1);
\draw (c3pjc1) -- (pjc1); 
\draw (c2pjc1) -- (pic2pjc1);
\draw (c3pjc1) -- (pic3pjc1);
\draw (c2pic1) -- (pjc2pic1);
\draw (c3pic1) -- (pjc3pic1);
\draw (pic1) -- (rijc1);
\draw (pjc1) -- (rijc1);
\draw (pic2pjc1) -- (rijc1);
\draw (pic3pjc1) -- (rijc1);
\draw (pjc2pic1) -- (rijc1);
\draw (pjc3pic1) -- (rijc1);

\draw (pjc2pic1) -- (c2pjc2pic1);
\draw (pjc2pic1) -- (c3pjc2pic1);
\draw (pic3pjc1) -- (c3pjc2pic1);
\draw (pic2pjc1) -- (c2pjc2pic1);

\draw (pjc3pic1) -- (c2pjc3pic1);
\draw (pic2pjc1) -- (c2pjc3pic1);
\draw (pjc3pic1) -- (c3pjc3pic1);
\draw (pic3pjc1) -- (c3pjc3pic1);

\draw (pkc2pic1) -- (c2pic1);
\draw (pkc3pic1) -- (c3pic1); 
\draw (pkc2pjc1) -- (c2pjc1);
\draw (pkc3pjc1) -- (c3pjc1);
\draw (pkc2pjc2pic1) -- (c2pjc2pic1); 
\draw (pkc3pjc2pic1) -- (c3pjc2pic1);
\draw (pkc2pjc3pic1) -- (c2pjc3pic1);
\draw (pkc3pjc3pic1) -- (c3pjc3pic1); 

\draw node[chamberpoint] at (c1){};
\draw node[chamberpoint] at (c2){};
\draw node[chamberpoint] at (c3){};
\draw node[chamberpoint] at (c4){};
\draw node[chamberpoint] at (c5){};
\draw node[chamberpoint] at (c6){};
\draw node[chamberpoint] at (c7){};
\draw node[chamberpoint] at (c8){};
\draw node[chamberpoint] at (c9){};
\draw node[chamberpoint] at (c10){};
\draw node[chamberpoint] at (c11){};
\draw node[chamberpoint] at (c12){};
\draw node[chamberpoint] at (c2pic1){};
\draw node[chamberpoint] at (c3pic1){};
\draw node[chamberpoint] at (c2pjc1){};
\draw node[chamberpoint] at (c3pjc1){};
\draw node[chamberpoint] at (c2pjc2pic1){};
\draw node[chamberpoint] at (c2pjc3pic1){};
\draw node[chamberpoint] at (c3pjc2pic1){};
\draw node[chamberpoint] at (c3pjc3pic1){};

\draw node[kpoint] at (pkc1){};
\draw node[kpoint] at (pkc2pic1){};
\draw node[kpoint] at (pkc3pic1){};
\draw node[kpoint] at (pkc2pjc1){};
\draw node[kpoint] at (pkc3pjc1){};
\draw node[kpoint] at (pkc2pjc2pic1){};
\draw node[kpoint] at (pkc2pjc3pic1){};
\draw node[kpoint] at (pkc3pjc2pic1){};
\draw node[kpoint] at (pkc3pjc3pic1){};

\draw node[ipoint] at (pic1){};
\draw node[ipoint] at (pic2pjc1){};
\draw node[ipoint] at (pic3pjc1){};

\draw node[jpoint] at (pjc1){};
\draw node[jpoint] at (pjc2pic1){};
\draw node[jpoint] at (pjc3pic1){};

\draw node[ijpoint] at (rijc1){};

\draw node at (2.87+0.92+0.2,-0.5+0.38){$\dots$};
\draw node at (1.87+0.7+0.92+0.2,-0.5+0.7+0.38){$\dots$};
\draw node at (0.87+0.5-0.92-0.2, -1.5-0.87+0.38){$\dots$};
\draw node at (0.87-0.92-0.2,-1.5-1-0.38){$\dots$};
\draw node at (2.87+0.5+0.92+0.2, -1.5-0.87-0.38){$\dots$};
\draw node at (2.87+0.92+0.2, -1.5-1-0.38){$\dots$};
\draw node at (1.87+0.7+0.5+0.92+0.2,-0.5+0.7-1-0.87-0.38){$\dots$};
\draw node at (1.87+0.7+0.92+0.2, -1.5-1+0.7-0.38){$\dots$};
\draw node at (-0.5-0.2,0.87+0.1){$\dots$};
\draw node at (-0.5-0.1,-0.87-0.2){$\dots$};
\draw node at (0.87+0.1,0.5+0.2){$\dots$};

\end{tikzpicture}

In both pictures the {\begin{tikzpicture}[kpoint/.style={circle,draw,fill=lightgray,inner sep=1.5pt}] \draw node[kpoint] at (0,0){}; \end{tikzpicture}}-vertices correspond to $k$-panels, the {\begin{tikzpicture}[ipoint/.style={circle,draw,fill=darkgray,inner sep=1.5pt}]  \draw node[ipoint] at (0,0){}; \end{tikzpicture}}-vertices to $i$-panels, the {\begin{tikzpicture}[jpoint/.style={circle,draw,fill=gray,inner sep=1.5pt}] \draw node[jpoint] at (0,0){}; \end{tikzpicture}}-vertices to $j$-panels and the {\begin{tikzpicture}[ijpoint/.style={diamond,draw,fill=black,inner sep=1.5pt}] \draw node[ijpoint] at (0,0){}; \end{tikzpicture}}-vertices to residues of type $\{i,j\}$. 
The {\begin{tikzpicture}[chamberpoint/.style={circle,draw,fill=black,inner sep=1pt}] \draw node[chamberpoint] at (0,0){}; \end{tikzpicture}}-vertices correspond to chambers. 
All squares are filled. 
\caption{An example of a pair of buildings satisfying the assumptions.}\label{DavRealExample}
\end{figure}

At first, we want to construct a bijection of chambers from $Ch(\Delta)$ to $Ch(\tilde{\Delta})$.
For this we use tree-wall trees, which have been studied for universal groups in section 2.3 in \cite{TomAnaKoen}.  
The $k$-tree-wall tree associated to a building of this type is the following infinite graph. 
The vertices are the $k$-panels, which are residues of type $k$, and the residues of type $\{i,j\}$, and there is an edge whenever the intersection of the residues is not empty. 
So there are no edges between different $k$-panels and between different residues of type $\{i,j\}$. 
The intersection of a $k$-panel and a residue of type $\{i,j\}$  is either empty or a single chamber. 
Moreover, every chamber is contained in exactly one $k$-panel and one residue of type $\{i,j\}$.
Hence, the edges are in bijection with the chambers of the building. 
The graph is a biregular tree with valencies the cardinality of the $k$-panels and of the residues of type $\{i,j\}$ and by Proposition 2.39 in \cite{TomAnaKoen} it is indeed a tree. 
An example of a $k$-tree-wall tree is pictured in Figure \ref{ExampleTreeWallTree}.
\begin{figure}[ht]\centering
\begin{tikzpicture}[kpoint/.style={circle,draw,fill=lightgray,inner sep=1.5pt},
					ijpoint/.style={diamond,draw,fill=black,inner sep=1.5pt},scale=2]
\coordinate (ij1) at (0,0);

\coordinate (k1) at (1,0);
\coordinate (k2) at (0.87,0.5);
\coordinate (k3) at (0.5,0.87);
\coordinate (k4) at (0,1);
\coordinate (k5) at (-0.5,0.87);
\coordinate (k6) at (-0.87,0.5);
\coordinate (k7) at (-1,0);
\coordinate (k8) at (-0.87,-0.5);
\coordinate (k9) at (-0.5,-0.87);
\coordinate (k10) at (0,-1);
\coordinate (k11) at (0.5,-0.87);
\coordinate (k12) at (0.87,-0.5);

\coordinate (k2ij1) at (0.87+1,0.5+0);
\coordinate (k2ij2) at (0.87+0.98,0.5+0.17);
\coordinate (k2ij3) at (0.87+0.94,0.5+0.34);
\coordinate (k2ij4) at (0.87+0.87,0.5+0.5);
\coordinate (k2ij5) at (0.87+0.77,0.5+0.64);
\coordinate (k2ij6) at (0.87+0.64,0.5+0.77);
\coordinate (k2ij7) at (0.87+0.5,0.5+0.87);
\coordinate (k2ij8) at (0.87+0.34,0.5+0.94);

\coordinate (k6ij1) at (-0.87-0.5,0.5+0.87);
\coordinate (k6ij2) at (-0.87-0.64,0.5+0.77);
\coordinate (k6ij3) at (-0.87-0.77,0.5+0.64);
\coordinate (k6ij4) at (-0.87-0.87,0.5+0.5);
\coordinate (k6ij5) at (-0.87-0.94,0.5+0.34);
\coordinate (k6ij6) at (-0.87-0.99,0.5+0.17);
\coordinate (k6ij7) at (-0.87-1,0.5+0);
\coordinate (k6ij8) at (-0.87-0.98,0.5-0.17);

\coordinate (k2ij3k1) at (0.87+0.94+0.99,0.5+0.34-0.14);
\coordinate (k2ij3k2) at (0.87+0.94+1,0.5+0.34+0);
\coordinate (k2ij3k3) at (0.87+0.94+0.99,0.5+0.34+0.14);
\coordinate (k2ij3k4) at (0.87+0.94+0.96,0.5+0.34+0.28);
\coordinate (k2ij3k5) at (0.87+0.94+0.91,0.5+0.34+0.41);
\coordinate (k2ij3k6) at (0.87+0.94+0.85,0.5+0.34+0.53);
\coordinate (k2ij3k7) at (0.87+0.94+0.77,0.5+0.34+0.64);
\coordinate (k2ij3k8) at (0.87+0.94+0.67,0.5+0.34+0.74);
\coordinate (k2ij3k9) at (0.87+0.94+0.56,0.5+0.34+0.83);
\coordinate (k2ij3k10) at (0.87+0.94+0.43,0.5+0.34+0.9);
\coordinate (k2ij3k11) at (0.87+0.94+0.96,0.5+0.34-0.28);

\draw (ij1) -- (k1);
\draw (ij1) -- (k2);
\draw (ij1) -- (k3);
\draw (ij1) -- (k4);
\draw (ij1) -- (k5);
\draw (ij1) -- (k6);
\draw (ij1) -- (k7);
\draw (ij1) -- (k8);
\draw (ij1) -- (k9);
\draw (ij1) -- (k10);
\draw (ij1) -- (k11);
\draw (ij1) -- (k12);

\draw (k2) -- (k2ij1);
\draw (k2) -- (k2ij2);
\draw (k2) -- (k2ij3);
\draw (k2) -- (k2ij4);
\draw (k2) -- (k2ij5);
\draw (k2) -- (k2ij6);
\draw (k2) -- (k2ij7);
\draw (k2) -- (k2ij8);

\draw (k6) -- (k6ij1);
\draw (k6) -- (k6ij2);
\draw (k6) -- (k6ij3);
\draw (k6) -- (k6ij4);
\draw (k6) -- (k6ij5);
\draw (k6) -- (k6ij6);
\draw (k6) -- (k6ij7);
\draw (k6) -- (k6ij8);

\draw (k2ij3) -- (k2ij3k1);
\draw (k2ij3) -- (k2ij3k2);
\draw (k2ij3) -- (k2ij3k3);
\draw (k2ij3) -- (k2ij3k4);
\draw (k2ij3) -- (k2ij3k5);
\draw (k2ij3) -- (k2ij3k6);
\draw (k2ij3) -- (k2ij3k7);
\draw (k2ij3) -- (k2ij3k8);
\draw (k2ij3) -- (k2ij3k9);
\draw (k2ij3) -- (k2ij3k10);
\draw (k2ij3) -- (k2ij3k11);

\draw node[ijpoint] at (ij1){};

\draw node[kpoint] at (k1){};
\draw node[kpoint] at (k2){};
\draw node[kpoint] at (k3){};
\draw node[kpoint] at (k4){};
\draw node[kpoint] at (k5){};
\draw node[kpoint] at (k6){};
\draw node[kpoint] at (k7){};
\draw node[kpoint] at (k8){};
\draw node[kpoint] at (k9){};
\draw node[kpoint] at (k10){};
\draw node[kpoint] at (k11){};
\draw node[kpoint] at (k12){};

\draw node[ijpoint] at (k2ij1){};
\draw node[ijpoint] at (k2ij2){};
\draw node[ijpoint] at (k2ij3){};
\draw node[ijpoint] at (k2ij4){};
\draw node[ijpoint] at (k2ij5){};
\draw node[ijpoint] at (k2ij6){};
\draw node[ijpoint] at (k2ij7){};
\draw node[ijpoint] at (k2ij8){};

\draw node[ijpoint] at (k6ij1){};
\draw node[ijpoint] at (k6ij2){};
\draw node[ijpoint] at (k6ij3){};
\draw node[ijpoint] at (k6ij4){};
\draw node[ijpoint] at (k6ij5){};
\draw node[ijpoint] at (k6ij6){};
\draw node[ijpoint] at (k6ij7){};
\draw node[ijpoint] at (k6ij8){};

\draw node[kpoint] at (k2ij3k1){};
\draw node[kpoint] at (k2ij3k2){};
\draw node[kpoint] at (k2ij3k3){};
\draw node[kpoint] at (k2ij3k4){};
\draw node[kpoint] at (k2ij3k5){};
\draw node[kpoint] at (k2ij3k6){};
\draw node[kpoint] at (k2ij3k7){};
\draw node[kpoint] at (k2ij3k8){};
\draw node[kpoint] at (k2ij3k9){};
\draw node[kpoint] at (k2ij3k10){};
\draw node[kpoint] at (k2ij3k11){};

\draw node[] at (1.2,-0.7){$\dots$};
\draw node[] at (-1.2,-0.75){$\dots$};
\draw node[] at (-2.18,0.68){$\dots$};
\draw node[] at (-0.2,1.2){$\dots$};
\draw node[] at (2.92,1.39){$\dots$};
\draw node[] at (1.57,1.58){$\dots$};
\end{tikzpicture}

A part of the $k$-tree-wall tree associated to $\Delta$ with thickness $q_i=4, q_j=3$ and $q_k=9$. The 
\begin{tikzpicture}
\draw node[circle,draw,fill=lightgray,inner sep=1.5pt] at (0,0){};
\end{tikzpicture}
-vertices correspond to $k$-panels and the 
\begin{tikzpicture}
\draw node[diamond,draw,fill=black,inner sep=1.5pt] at (0,0){};
\end{tikzpicture}
-vertices to residues of type $\{i,j\}$.
\caption{Example of a $k$-tree-wall tree.}\label{ExampleTreeWallTree} 
\end{figure}

The $k$-tree-wall trees associated to $\Delta$ and $\tilde{\Delta}$ are both $(q_k,\tilde{q}_k)$-regular trees, hence they are isomorphic and so we get a bijection of chambers
\begin{align*}
\psi \colon Ch(\Delta) \to Ch(\tilde{\Delta}),
\end{align*}
that maps $k$-panels of $\Delta$ to residues of type $\{i,j\}$ in $\tilde{\Delta}$ and the residues of type $\{i,j\}$ in $\Delta$ to the $k$-panels of $\tilde{\Delta}$.
We get the equivalences
\begin{align*}
c \sim_k d &\Leftrightarrow \psi(c) \in Res_{\{i,j\}}(\psi(d)) \\
c \in Res_{\{i,j\}}(d) &\Leftrightarrow \psi(c) \sim_k \psi(d). 
\end{align*}
for any chambers $c,d \in Ch(\Delta)$.

Now we need colourings for both buildings. 
Let $X_i$ and $X_j$ be sets of cardinality $q_i$ and $q_j$ respectively, and $\tilde{X}_i$ and $\tilde{X}_j$ be sets of cardinality $\tilde{q}_i$ and $\tilde{q}_j$ and define $X_k = \tilde{X}_i \times \tilde{X}_j$ and $\tilde{X}_k = X_i \times X_j$. 
Then $X_k$ has cardinality $q_k = \tilde{q}_i \tilde{q}_j$ and $\tilde{X}_k$ has cardinality $\tilde{q}_k = q_i q_j$.
Let $\lambda_i \colon Ch(\Delta) \to X_i$ be a map, such that the restriction of the map to every $i$-panel in $\Delta$ is a bijection and to every panel of different type is constant. 
Define $\lambda_j \colon Ch(\Delta) \to X_j, \tilde{\lambda}_i \colon Ch(\tilde{\Delta}) \to \tilde{X}_i$ and $\tilde{\lambda}_j \colon Ch(\tilde{\Delta}) \to \tilde{X}_j$ in the same way. 
We use these maps and the bijection of chambers to define the $k$-colourings in the following way
\begin{align*}
\lambda_k \colon Ch(\Delta) &\to X_k , c \mapsto \left( \tilde{\lambda}_i(\psi(c)), \tilde{\lambda}_j(\psi(c)) \right), \\
\tilde{\lambda}_k \colon Ch(\tilde{\Delta}) &\to \tilde{X}_k, \tilde{c} \mapsto \left( \lambda_i(\psi^{-1}(\tilde{c})), \lambda_j(\psi^{-1}(\tilde{c})) \right).
\end{align*}
Since $\tilde{\lambda}_i$ and $\tilde{\lambda}_j$ are bijective, and the image of a $k$-panel is a residue of type $\{i,j\}$, which is finite, the restriction of $\lambda_k$ to a $k$-panel is a bijection.
The image of a panel of type $i$ or $j$ is contained in a $k$-panel and since $\tilde{\lambda}_i$ and $\tilde{\lambda}_j$ are constant on $k$-panels, the restriction of $\lambda_k$ to panels of type $i$ and $j$ is constant.
Analogously, it follows that the restriction of $\tilde{\lambda}_k$ to a $k$-panel is bijective and to a panel of type $i$ or $j$ constant.  
Hence, we get legal colourings $\lambda \colon Ch(\Delta) \to X_i \times X_j \times X_k$ of $\Delta$ and $\tilde{\lambda} \colon Ch(\tilde{\Delta}) \to \tilde{X}_i \times \tilde{X}_j \times \tilde{X}_k$ of $\tilde{\Delta}$.

Let $F_i \subseteq Sym(X_i), F_j \subseteq Sym(X_j), \tilde{F}_i \subseteq Sym(\tilde{X}_i)$ and $\tilde{F}_j \subseteq Sym(\tilde{X}_j)$ be transitive subgroups. 
It is not necessary to assume transitivity, however we want to consider chamber-transitive universal groups and therefore we need to consider transitive local groups. 
Define $F_k = \tilde{F}_i \times \tilde{F}_j$ and $\tilde{F}_k = F_i \times F_j$, then $F_k$ acts transitively on $X_k$ and $\tilde{F}_k$ transitively on $\tilde{X}_k$. 
Let \U be the universal group of $\Delta$ with respect to $(F_i, F_j, F_k)$ and let $\tilde{\mathU}$ be the universal group of $\tilde{\Delta}$ with respect to $(\tilde{F}_i, \tilde{F}_j, \tilde{F}_k)$. 
We will prove that these universal groups \U and $\tilde{\mathU}$ are isomorphic as topological groups.
We start with setting the automorphisms of $\Delta$ in relation to those of $\tilde{\Delta}$ by using the bijection of chambers. 

\begin{lem}
For every $g \in \mathU$ the conjugate $\psi g \psi^{-1}$ is an automorphism of $\tilde{\Delta}$ and contained in the universal group $\tilde{\mathU}$. 
\end{lem}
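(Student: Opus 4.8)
Here is the plan. I would fix $g \in \mathcal{U}$, set $h := \psi g \psi^{-1}$, and argue in two stages: first that $h$ preserves $s$-adjacency for each $s \in \{i,j,k\}$, so that $h$ is a type-preserving automorphism of $\tilde{\Delta}$; then that the local actions of $h$ on all panels of $\tilde{\Delta}$ lie in $\tilde{F}_i$, $\tilde{F}_j$, $\tilde{F}_k$, so that $h \in \tilde{\mathcal{U}}$. Both stages rest on one observation about how $g$ behaves on the panels and $\{i,j\}$-residues of $\Delta$, which I would record first.

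The observation has two halves. On a $k$-panel $\mathcal{P}$ of $\Delta$, the hypothesis $g \in \mathcal{U}$ gives $\sigma_{\lambda}(g,\mathcal{P}) \in F_k = \tilde{F}_i \times \tilde{F}_j$; writing this permutation as $(\rho_1,\rho_2)$ with $\rho_1 \in \tilde{F}_i$ and $\rho_2 \in \tilde{F}_j$, it acts coordinatewise on $X_k = \tilde{X}_i \times \tilde{X}_j$, so that $\lambda_k(g(c)) = (\rho_1(a),\rho_2(b))$ whenever $c \in \mathcal{P}$ and $\lambda_k(c) = (a,b)$. Now let $\mathcal{R}$ be an $\{i,j\}$-residue of $\Delta$; since $m_{ij}=2$, it is the direct product of an $i$-panel and a $j$-panel, and $g$ restricts to an isomorphism onto another $\{i,j\}$-residue. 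Because $\lambda_i$ is constant on every $j$-panel, $\lambda_j$ is constant on every $i$-panel, and $g$ sends $i$-panels to $i$-panels and $j$-panels to $j$-panels, there are permutations $\sigma$ of $X_i$ and $\tau$ of $X_j$ with $\lambda_i(g(c)) = \sigma(\lambda_i(c))$ and $\lambda_j(g(c)) = \tau(\lambda_j(c))$ for all $c \in \mathcal{R}$; restricting to one $i$-panel, resp.\ $j$-panel, of $\mathcal{R}$ identifies $\sigma$, resp.\ $\tau$, with a local action of $g$, whence $\sigma \in F_i$ and $\tau \in F_j$. In other words, $g$ acts on $\mathcal{R}$ as the product permutation $\sigma \times \tau \in F_i \times F_j$ in the grid coordinates $(\lambda_i,\lambda_j)$.

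For the first stage, $h$ is a bijection of $Ch(\tilde{\Delta})$, so it suffices to show it preserves $k$-, $i$- and $j$-adjacency, the converses then following by the same argument applied to $h^{-1}=\psi g^{-1}\psi^{-1}$ with $g^{-1}\in\mathcal{U}$. Preservation of $\sim_k$ is immediate from the two equivalences $c \sim_k d \Leftrightarrow \psi(c)\in Res_{\{i,j\}}(\psi(d))$ and $c\in Res_{\{i,j\}}(d)\Leftrightarrow\psi(c)\sim_k\psi(d)$ together with the fact that $g$ permutes the $\{i,j\}$-residues of $\Delta$. For $\sim_i$: the legal colouring of $\tilde{\Delta}$ shows that $\tilde{c}\sim_i\tilde{d}$ holds precisely when $\tilde{c}$ and $\tilde{d}$ lie in a common $\{i,j\}$-residue of $\tilde{\Delta}$ with $\tilde{\lambda}_j(\tilde{c})=\tilde{\lambda}_j(\tilde{d})$; transporting this through $\psi$, using that $\tilde\lambda_j\circ\psi$ is the second coordinate of $\lambda_k$ and using the equivalences above, it becomes the assertion that $c:=\psi^{-1}(\tilde{c})$ and $d:=\psi^{-1}(\tilde{d})$ lie in a common $k$-panel of $\Delta$ with equal second $\lambda_k$-coordinate; the identity $\lambda_k(g(c))=(\rho_1,\rho_2)(\lambda_k(c))$ from the observation then shows $g(c)$ and $g(d)$ again satisfy these conditions, i.e.\ $h(\tilde c)\sim_i h(\tilde d)$. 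The case of $\sim_j$ is symmetric, with the first $\lambda_k$-coordinate replacing the second. Thus $h$ is a type-preserving automorphism of $\tilde{\Delta}$.

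For the second stage, a $k$-panel of $\tilde{\Delta}$ is $\psi(\mathcal{R})$ for an $\{i,j\}$-residue $\mathcal{R}$ of $\Delta$, and unwinding $\tilde{\lambda}_k=(\lambda_i,\lambda_j)\circ\psi^{-1}$ and $h=\psi g\psi^{-1}$ shows that $\sigma_{\tilde{\lambda}}(h,\psi(\mathcal{R}))$ is exactly $g|_{\mathcal{R}}$ written in the coordinates $(\lambda_i,\lambda_j)$, namely the product permutation $\sigma\times\tau$, which lies in $F_i\times F_j=\tilde{F}_k$. If $\tilde{\mathcal{P}}$ is an $i$-panel of $\tilde{\Delta}$, then $\tilde{\mathcal{P}}$ lies inside some $\psi(\mathcal{P})$ with $\mathcal{P}$ a $k$-panel of $\Delta$ and consists of the chambers of $\psi(\mathcal{P})$ with a fixed value of $\tilde{\lambda}_j$; unwinding the definitions, with $\tilde\lambda_i\circ\psi$ the first coordinate of $\lambda_k$, shows $\sigma_{\tilde{\lambda}}(h,\tilde{\mathcal{P}})=\rho_1\in\tilde{F}_i$, and symmetrically the local action of $h$ on a $j$-panel of $\tilde\Delta$ is $\rho_2\in\tilde{F}_j$. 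Hence $h\in\tilde{\mathcal{U}}$. The step I expect to cost the most care is the coordinate bookkeeping, keeping straight which factor of $X_k$, resp.\ of $\tilde{X}_k$, records the $i$-panel and which the $j$-panel on the other building, together with the passage from the hypothesis that $g$ has prescribed local action on each $i$- and $j$-panel of $\Delta$ to the conclusion that $g$ acts as a product permutation in $F_i\times F_j$ on each $\{i,j\}$-residue of $\Delta$; it is this product structure of rank-$2$ residues with $m_{ij}=2$ that forces the $k$-panel local actions of $h$ into $\tilde{F}_k$, everything else being formal manipulation of the identities linking $\lambda$, $\tilde{\lambda}$, and $\psi$.
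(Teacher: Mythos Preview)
Your proposal is correct and follows essentially the same approach as the paper: both verify type-preservation by translating $i$- and $j$-adjacency in $\tilde{\Delta}$ through $\psi$ to a condition on $k$-panels of $\Delta$ and using that the local action there lies in $\tilde{F}_i\times\tilde{F}_j$, and both compute the local actions of $h$ directly from the identities linking $\lambda$, $\tilde{\lambda}$, and $\psi$. Your front-loaded observation that $g$ acts on each $\{i,j\}$-residue of $\Delta$ as a product permutation $\sigma\times\tau\in F_i\times F_j$ makes explicit a step the paper compresses into the one-line computation for $k$-panels of $\tilde{\Delta}$.
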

\begin{proof}
First, we need to prove that $\psi g \psi^{-1}$ is an automorphism of $\tilde{\Delta}$. 
Let $\tilde{c}, \tilde{d} \in Ch(\tilde{\Delta})$. 
Assume that $\psi g \psi^{-1}(\tilde{c}) = \psi g \psi^{-1}(\tilde{d})$, since $\psi$ and $g$ are bijections of chambers it follows that $\tilde{c}=\tilde{d}$. 
Furthermore, $\psi g^{-1} \psi^{-1}(\tilde{c})$ is a chamber of $\tilde{\Delta}$ and a preimage of $\tilde{c}$. 
Hence, $\psi g \psi^{-1}$ is a bijection of chambers.

Further, we need to prove that $\psi g \psi^{-1}$ acts type-preservingly. 
For this, let $\tilde{c} \sim_i \tilde{d}$. 
Then by the definition of the colouring, they have the same $j$-colour. 
Moreover, by the definition of $\psi$, their preimages $\psi^{-1}(\tilde{c})$ and $\psi^{-1}(\tilde{d})$ lie in the same $k$-panel of $\Delta$. 
This adjacency is preserved by $g$ and again by the definition of $\psi$ it follows that $\psi g \psi^{-1}(\tilde{c})$ and $\psi g \psi^{-1}(\tilde{d})$ are in the same residue of type $\{i,j\}$ in $\tilde{\Delta}$. 

But since $g \in \mathU$ and $\psi^{-1}(\tilde{c})$ and $\psi^{-1}(\tilde{d})$ are $k$-adjacent, there are $\sigma_1 \in \tilde{F}_i$ and $\sigma_2 \in \tilde{F}_j$ such that $\sigma_{\lambda}(g, \mathcal{P}_{k,\psi^{-1}(\tilde{c})})=(\sigma_1,\sigma_2)$. 
Hence, it follows that 
\begin{align*}
(\sigma_1, \sigma_2)\left(\lambda_k(\psi^{-1}(\tilde{c}))\right) 
= \lambda_k(g \psi^{-1}(\tilde{c})) \text{ and }
(\sigma_1, \sigma_2)\left(\lambda_k(\psi^{-1}(\tilde{d}))\right) 
= \lambda_k(g \psi^{-1}(\tilde{d}))
\end{align*}
and moreover, by the definition of the $k$-colouring of $\Delta$, it follows that
\begin{align*}
\left(\sigma_1(\tilde{\lambda}_i(\tilde{c})), \sigma_2(\tilde{\lambda}_j(\tilde{c})) \right)
&= \left( \tilde{\lambda}_i(\psi g \psi^{-1}(\tilde{c})), \tilde{\lambda}_j(\psi g \psi^{-1}(\tilde{c})) \right) \text{ and } \\
\left(\sigma_1(\tilde{\lambda}_i(\tilde{d})), \sigma_2(\tilde{\lambda}_j(\tilde{d})) \right)
&=\left( \tilde{\lambda}_i(\psi g \psi^{-1}(\tilde{d})), \tilde{\lambda}_j(\psi g \psi^{-1}(\tilde{d})) \right).
\end{align*}   
By using that $\tilde{c}$ and $\tilde{d}$ have the same $j$-colour, we conclude that 
\begin{align*}
\tilde{\lambda}_j(\psi g \psi^{-1} (\tilde{c})) = \sigma_2 \tilde{\lambda}_j(\tilde{c}) = \sigma_2 \tilde{\lambda}_j(\tilde{d})= \tilde{\lambda}_j(\psi g \psi^{-1} (\tilde{d})) 
\end{align*}
and hence $\psi g \psi^{-1} (\tilde{c})$ and $\psi g \psi^{-1} (\tilde{d})$ have the same $j$-colour and are contained in the same residue of type $\{i,j\}$. 
It follows that they are contained in the same $i$-panel. 

If $\tilde{c}$ and $\tilde{d}$ are contained in the same $j$-panel it follows analogously that their images $\psi g \psi^{-1}(\tilde{c})$ and $\psi g \psi^{-1}(\tilde{d})$ are also contained in the same $j$-panel. 

Let now $\tilde{c} \sim_k \tilde{d}$. 
Then by definition of $\psi$ the preimages $\psi^{-1}(\tilde{c})$ and $\psi^{-1}(\tilde{d})$ lie in the same residue of type $\{i,j\}$ in $\Delta$.
Since adjacency is preserved by $g$ and again by the definition of $\psi$, we get that $\psi g \psi^{-1}(\tilde{c}) \sim _k \psi g \psi^{-1}(\tilde{d})$. 
So we conclude that $\psi g \psi^{-1}$ is an automorphism of $\tilde{\Delta}$. 

Second, we need to prove that $\psi g \psi^{-1}$ is contained in the universal group $\tilde{\mathU}$. 
For this we consider the local actions and show that they are contained in the local groups. 
Let $\tilde{\mathcal{P}}$ be an $i$-panel of $\tilde{\Delta}$, then we get
\begin{align*}
\sigma_{\tilde{\lambda}}\left( \psi g\psi^{-1}, \tilde{\mathcal{P}} \right) 
&= \tilde{\lambda}_i \circ \psi g\psi^{-1} \circ (\tilde{\lambda}_i)^{-1} \vert_{\tilde{\mathcal{P}}} \\
&= pr_1 \lambda_k g (\lambda_k)^{-1} \vert _{\psi^{-1}(\tilde{\mathcal{P}})} \\
&= pr_1 \sigma_{\lambda}\left( g , Res_k({\psi^{-1}(\tilde{\mathcal{P}})}) \right) \in \tilde{F}_i. 
\end{align*}
If $\tilde{\mathcal{P}}$ is a $j$-panel we get again $\sigma_{\tilde{\lambda}}( \psi g\psi^{-1}, \tilde{\mathcal{P}}) \in \tilde{F}_j$.  
Assume now that $\tilde{\mathcal{P}}$ is a $k$-panel. 
Then we get
\begin{align*}
\sigma_{\tilde{\lambda}}\left( \psi g\psi^{-1}, \tilde{\mathcal{P}} \right) 
&= \tilde{\lambda}_k \circ \psi g\psi^{-1} \circ (\tilde{\lambda}_k)^{-1} \vert_{\tilde{\mathcal{P}}} \\
&=  (\lambda_i g (\lambda_i)^{-1}, \lambda_j g (\lambda_j)^{-1})\vert_{\psi^{-1}(\tilde{\mathcal{P}})}  \in F_i \times F_j = \tilde{F}_k.
\end{align*}
Hence we conclude that $\psi g\psi^{-1} \in \tilde{\mathU}$. 
\end{proof}
Completely analogously, we get that $\psi^{-1} \tilde{g} \psi \in \mathU$ for every automorphism $\tilde{g} \in \tilde{\mathU}$.
So we can define the following map 
\begin{align*}
\varphi \colon \mathU \to \tilde{\mathU}, g \mapsto \psi g \psi^{-1}.
\end{align*}
It follows immediately that $\varphi$ is a group homomorphism. 
\begin{prop}
The group homomorphism $\varphi \colon \mathU \to \tilde{\mathU}, g \mapsto \psi g \psi^{-1}$ is a bijection.
\end{prop}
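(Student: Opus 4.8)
The plan is to exhibit an explicit two-sided inverse of $\varphi$. The whole construction is symmetric in $\Delta$ and $\tilde{\Delta}$: the defining equations $q_i q_j = \tilde{q}_k$ and $\tilde{q}_i \tilde{q}_j = q_k$, the definitions of the $k$-colourings $\lambda_k$ and $\tilde{\lambda}_k$, and the definitions of the local groups $F_k = \tilde{F}_i \times \tilde{F}_j$ and $\tilde{F}_k = F_i \times F_j$ are all invariant under interchanging the data attached to $\Delta$ with that attached to $\tilde{\Delta}$ while replacing $\psi$ by $\psi^{-1}$. Hence the preceding Lemma, applied with the roles of $\Delta$ and $\tilde{\Delta}$ exchanged, yields that $\psi^{-1}\tilde{g}\psi \in \mathU$ for every $\tilde{g} \in \tilde{\mathU}$ --- this is exactly the statement already noted right after the Lemma --- and that
\begin{align*}
\tilde{\varphi} \colon \tilde{\mathU} \to \mathU,\quad \tilde{g} \mapsto \psi^{-1}\tilde{g}\psi
\end{align*}
is a group homomorphism.

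It then suffices to check that $\varphi$ and $\tilde{\varphi}$ are mutually inverse. Since $\psi \colon Ch(\Delta) \to Ch(\tilde{\Delta})$ is a bijection of chamber sets we have $\psi^{-1}\psi = \mathrm{id}_{Ch(\Delta)}$ and $\psi\psi^{-1} = \mathrm{id}_{Ch(\tilde{\Delta})}$, so for every $g \in \mathU$ we get $\tilde{\varphi}(\varphi(g)) = \psi^{-1}(\psi g \psi^{-1})\psi = g$, and for every $\tilde{g} \in \tilde{\mathU}$ we get $\varphi(\tilde{\varphi}(\tilde{g})) = \psi(\psi^{-1}\tilde{g}\psi)\psi^{-1} = \tilde{g}$. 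Therefore $\varphi$ is bijective with inverse $\tilde{\varphi}$. (Equivalently, one can argue directly: injectivity of $\varphi$ is immediate because $\psi$ and $\psi^{-1}$ are bijections of the chamber sets, and surjectivity follows by applying the symmetric form of the Lemma to a given $\tilde{g} \in \tilde{\mathU}$ and noting that $g := \psi^{-1}\tilde{g}\psi \in \mathU$ satisfies $\varphi(g) = \tilde{g}$.)

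I do not expect any genuine obstacle here: all of the substantive verification --- that conjugation by $\psi$ preserves $i$-, $j$- and $k$-adjacency and carries the prescribed local actions into the prescribed local groups --- was already carried out in the Lemma. The only point requiring a moment's attention is to invoke the symmetric form of the Lemma correctly, so that $\tilde{\varphi}$ really takes values in $\mathU$ and not merely in $Aut(\Delta)$; granting that, the proof reduces to the one-line identities $\tilde{\varphi}\circ\varphi = \mathrm{id}_{\mathU}$ and $\varphi\circ\tilde{\varphi} = \mathrm{id}_{\tilde{\mathU}}$ recorded above.
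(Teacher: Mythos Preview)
Your proof is correct and essentially coincides with the paper's own argument. The paper establishes surjectivity exactly as in your parenthetical remark (using the symmetric form of the Lemma to find $\psi^{-1}\tilde{g}\psi \in \mathU$ as a preimage of $\tilde{g}$) and injectivity directly from $\psi$ being a bijection of chambers; your main presentation via the explicit two-sided inverse $\tilde{\varphi}$ is just a repackaging of the same content.
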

\begin{proof}
For every $\tilde{g} \in \tilde{\mathU}$ the automorphism $\psi^{-1} \tilde{g} \psi$ is contained in \U and moreover $\varphi(\psi^{-1} \tilde{g} \psi)= \tilde{g}$. 
Hence, $\varphi$ is surjective. 

Let $g,h \in \mathU$ with $\varphi(g)= \varphi(h)$.
Then $\psi g \psi^{-1} = \psi h \psi^{-1}$ and thus $\psi g \psi^{-1}(\tilde{c}) = \psi h \psi^{-1}(\tilde{c})$ for every chamber $\tilde{c} \in Ch(\tilde{\Delta})$. 
Since $\psi$ is a bijection of chambers it follows that $g(c)=h(c)$ for every chamber $c \in Ch(\Delta)$ and hence $\varphi$ is injective. 
\end{proof}
So we conclude that \U and $\tilde{\mathU}$ are isomorphic. 

Moreover, we want to prove that the topologies of \U and $\tilde{\mathU}$ also coincide and hence we need to prove first that $\varphi$ is continuous and then that it is open, to conclude that the topologies coincide. 
\begin{prop}
The isomorphism $\varphi$ is continuous.
\end{prop}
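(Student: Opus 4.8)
The plan is to exploit that $\varphi$ is a group homomorphism between topological groups, so that it suffices to check continuity at the identity; everything else is then just unwinding the definition of the topology of pointwise convergence. First I would recall, as stated in the introduction, that a neighbourhood basis of the identity in $\tilde{\mathU}$ is given by the pointwise stabilisers $\tilde{U}_{\tilde{S}} = \{\tilde{g} \in \tilde{\mathU} \mid \tilde{g}(\tilde{c}) = \tilde{c} \text{ for all } \tilde{c} \in \tilde{S}\}$ of finite sets of chambers $\tilde{S} \subseteq Ch(\tilde{\Delta})$, and similarly for $\mathU$.

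Next, given such a finite set $\tilde{S}$, I would set $S := \psi^{-1}(\tilde{S})$, which is again finite since $\psi$ is a bijection of chambers, and consider the pointwise stabiliser $U_S$ of $S$ in $\mathU$. The key point is that $\varphi(U_S) \subseteq \tilde{U}_{\tilde{S}}$: for $g \in U_S$ and $\tilde{c} \in \tilde{S}$ we have $\psi^{-1}(\tilde{c}) \in S$, hence $g$ fixes $\psi^{-1}(\tilde{c})$, and therefore $\varphi(g)(\tilde{c}) = \psi g \psi^{-1}(\tilde{c}) = \psi(\psi^{-1}(\tilde{c})) = \tilde{c}$. Thus the $\varphi$-preimage of every basic identity neighbourhood of $\tilde{\mathU}$ contains a basic identity neighbourhood of $\mathU$, which is precisely continuity at the identity, and hence continuity of $\varphi$.

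I do not expect any genuine obstacle here: the whole construction is set up so that the topologies on both groups are read off from the actions on chambers, and $\psi$ conjugates these actions through a chamber bijection, so continuity is essentially built in. The only things to be slightly careful about are invoking correctly that for homomorphisms of topological groups continuity at a single point implies continuity everywhere, and noting that finiteness of chamber sets is preserved under $\psi^{-1}$. I would phrase the argument symmetrically, since running it with $\psi$ and $\psi^{-1}$ (equivalently $\mathU$ and $\tilde{\mathU}$) interchanged will immediately yield that $\varphi$ is also open, hence a homeomorphism, in the following step.
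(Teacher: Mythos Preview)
Your proposal is correct and follows essentially the same approach as the paper: both arguments pull back a basic identity neighbourhood $\mathrm{Fix}_{\tilde{\mathU}}(\tilde{S})$ via $\psi^{-1}$ to the finite set $S=\psi^{-1}(\tilde{S})$, verify directly that $\varphi(\mathrm{Fix}_{\mathU}(S))\subseteq \mathrm{Fix}_{\tilde{\mathU}}(\tilde{S})$, and then use that continuity of a group homomorphism at the identity implies continuity everywhere. Your remark about symmetry is also exactly how the paper handles openness in the next proposition.
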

\begin{proof}
Let $W \subseteq \tilde{\mathU}$ be a neighbourhood of the identity. 
Hence, there exists a finite set of chambers $D \subseteq Ch(\tilde{\Delta})$ such that the pointwise stabiliser of $D$ in $\tilde{\mathU}$ is contained in $W$. 
Let $g \in Fix_{{\mathU}}(\psi^{-1}(D))$, then it follows that $\psi g \psi^{-1}(d) = d$ for every $d \in D$ and hence $\varphi(g) \in Fix_{\tilde{\mathU}}(D)$. 

It follows that the pointwise stabiliser of $\psi^{-1}(D)$ is contained in the preimage of the pointwise stabiliser of $D$ and since $\psi^{-1}(D)$ is finite,  $Fix_{\mathU}(\psi^{-1}(D))$ is an identity neighbourhood and contained in $\varphi^{-1}(W)$. 
Hence, $\varphi$ is continuous at the identity and thus everywhere. 
\end{proof}

\begin{prop}
The continuous isomorphism $\varphi$ is open and hence a homeomorphism. 
\end{prop}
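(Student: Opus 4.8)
The plan is to exploit the symmetry of the construction. Exchanging the roles of $\Delta$ and $\tilde{\Delta}$, of $\psi$ and $\psi^{-1}$, and of $\mathU$ and $\tilde{\mathU}$ turns $\varphi$ into its inverse $\varphi^{-1}\colon \tilde{\mathU} \to \mathU$, $\tilde{g} \mapsto \psi^{-1}\tilde{g}\psi$, which we already know to be a well-defined group homomorphism by the remark following the Lemma. Hence the proof of the preceding proposition applies verbatim, with the two buildings interchanged, and shows that $\varphi^{-1}$ is continuous. A continuous bijective homomorphism whose inverse is continuous is a homeomorphism, so $\varphi$ is open, which is what we want.

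To spell this out directly I would show that $\varphi$ maps the standard neighbourhood basis of the identity of $\mathU$ onto that of $\tilde{\mathU}$. Fix a finite set $C \subseteq Ch(\Delta)$; the claim is that
\begin{align*}
\varphi\bigl(Fix_{\mathU}(C)\bigr) = Fix_{\tilde{\mathU}}(\psi(C)).
\end{align*}
For the inclusion ``$\subseteq$'', if $g \in \mathU$ fixes $C$ pointwise then $\varphi(g)(\psi(c)) = \psi(g(c)) = \psi(c)$ for every $c \in C$, so $\varphi(g) \in Fix_{\tilde{\mathU}}(\psi(C))$. For ``$\supseteq$'', given $\tilde{g} \in Fix_{\tilde{\mathU}}(\psi(C))$, use the surjectivity of $\varphi$ (the preceding bijectivity proposition) to write $\tilde{g} = \varphi(g)$ with $g = \psi^{-1}\tilde{g}\psi \in \mathU$; then $\psi(g(c)) = \psi g \psi^{-1}(\psi(c)) = \tilde{g}(\psi(c)) = \psi(c)$, and since $\psi$ is a bijection of chambers this forces $g(c) = c$ for all $c \in C$, i.e. $g \in Fix_{\mathU}(C)$. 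Since $\psi(C)$ is finite, $Fix_{\tilde{\mathU}}(\psi(C))$ is an identity neighbourhood in $\tilde{\mathU}$.

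Finally I would conclude as follows. The sets $Fix_{\mathU}(C)$ with $C \subseteq Ch(\Delta)$ finite form a neighbourhood basis of the identity in $\mathU$, and as $C$ ranges over all such sets the images $\psi(C)$ range over all finite subsets of $Ch(\tilde{\Delta})$; hence by the displayed equality $\varphi$ carries this basis onto the neighbourhood basis $\{Fix_{\tilde{\mathU}}(D) : D \subseteq Ch(\tilde{\Delta}) \text{ finite}\}$ of the identity in $\tilde{\mathU}$. Being a group homomorphism, $\varphi$ is therefore open at the identity, hence open everywhere; together with the previous proposition this makes $\varphi$ a homeomorphism, so $\mathU$ and $\tilde{\mathU}$ are isomorphic as topological groups. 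The only point needing a little care is the inclusion ``$\supseteq$'' above, where one must invoke both the surjectivity of $\varphi$ and the injectivity of $\psi$ on chambers to pull the fixed-point condition back across $\psi$; otherwise the argument is a routine symmetric counterpart of the continuity proof, so I do not expect a genuine obstacle.
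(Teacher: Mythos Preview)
Your proposal is correct and follows essentially the same route as the paper: the paper also picks a finite $C\subseteq Ch(\Delta)$, shows $Fix_{\tilde{\mathU}}(\psi(C))\subseteq\varphi\bigl(Fix_{\mathU}(C)\bigr)$ by pulling back an element via the bijectivity of $\varphi$ and using that $\psi$ is a bijection of chambers, and concludes $\varphi(v)Fix_{\tilde{\mathU}}(\psi(C))\subseteq\varphi(V)$. You add the (valid) symmetry remark and the reverse inclusion to get an equality of stabilisers, but the core argument is the same.
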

\begin{proof}
Let $V \subseteq \mathU$ be an open set. 
Let $v \in V$, then exists a finite set of chambers $C \subseteq Ch(\Delta)$ such that $vFix_{\mathU}(C) \subseteq V$. 
Since $\varphi$ is bijective, there exists for every $h \in Fix_{\tilde{\mathU}}(\psi(C))$ a preimage $g \in \mathU$ with $\varphi(g)=h$. 
For every $c \in C$ we get $\psi(c) = h\psi(c)=\psi g \psi^{-1} \psi(c)=\psi g(c)$ and hence $g \in Fix_{\mathU}(C)$. 
Then the pointwise stabiliser of $\psi(C)$ 
is contained in the image of the pointwise stabiliser of $C$ under $\varphi$ 
and moreover $\varphi(v)Fix_{\tilde{\mathU}}(\psi(C)) \subseteq \varphi(V)$. 
Hence, the image $\varphi(V)$ is open. 
\end{proof}

So all in all, we proved that \U acts via $\varphi$ chamber-transitively and with compact open stabilisers on the building $\tilde{\Delta}$. 
From this, we can conclude that the building that is associated to a universal group may not be the unique right-angled building on which \U acts chamber-transitively and with compact open stabilisers.

Note that the topological properties of the universal groups depend on the properties of the local groups. 
If at least one local group does not act freely, then the universal group is not discrete. 
Hence, we obtain in this way topologically isomorphic non-discrete locally compact universal groups.  

This can be generalised to every thick semi-regular locally finite right-angled building of type $(W,I)$ if there exists $k \in I$ such that $I-\{k\}$ is spherical and $kj \neq jk$ for every $j \in I-\{k\}$.  

\subsection*{Acknowledgments}
I would like to thank my advisor Linus Kramer for many helpful remarks and suggestions, and Philip Möller and Daniel Keppeler for useful comments on an earlier version of this manuscript.

\end{document}